\documentclass[reqno, final, a4paper]{amsart}
\usepackage{color}
\usepackage[colorlinks=true,allcolors=blue
]{hyperref}
\usepackage{amsmath, amssymb, amsthm}
\usepackage{mathrsfs}
\usepackage{mathtools}
\usepackage[noabbrev,capitalize,nameinlink]{cleveref}
\crefname{equation}{}{}
\usepackage{fullpage}
\usepackage[noadjust]{cite}
\usepackage{graphics}
\usepackage{pifont}
\usepackage{cancel}
\usepackage{tikz}
\usepackage{bbm}
\usepackage[T1]{fontenc}
\usetikzlibrary{arrows.meta}

\usepackage{environ}
\usepackage{framed}
\usepackage{paralist}
\usepackage{url}
\usepackage[linesnumbered,ruled,vlined]{algorithm2e}
\usepackage[noend]{algpseudocode}
\usepackage[labelfont=bf]{caption}
\usepackage{framed}
\usepackage[framemethod=tikz]{mdframed}
\usepackage{pgfplots}
\usepackage{appendix}
\usepackage{graphicx}
\usepackage[textsize=tiny]{todonotes}
\usepackage{tcolorbox}
\usepackage{xcolor}
\usepackage[shortlabels]{enumitem}
\crefformat{enumi}{#2#1#3}
\crefrangeformat{enumi}{#3#1#4 to~#5#2#6}
\crefmultiformat{enumi}{#2#1#3}
{ and~#2#1#3}{, #2#1#3}{ and~#2#1#3}
\allowdisplaybreaks

\let\originalleft\left
\let\originalright\right
\renewcommand{\left}{\mathopen{}\mathclose\bgroup\originalleft}
\renewcommand{\right}{\aftergroup\egroup\originalright}

\crefname{algocf}{Algorithm}{Algorithms}

\crefname{equation}{}{} 
\AtBeginEnvironment{appendices}{\crefalias{section}{appendix}} 

\usepackage[color]{showkeys} 

\colorlet{refkey}{orange!20}
\colorlet{labelkey}{blue!30}

\crefname{algocf}{Algorithm}{Algorithms}

\numberwithin{equation}{section}

\crefname{claim}{Claim}{Claims}

\newtheorem*{question*}{Question}

\crefname{fact}{Fact}{Facts}

\theoremstyle{definition}
\newtheorem{definition}{Definition}

\newtheorem*{definition*}{Definition}

\theoremstyle{remark}

\newtheorem*{remark*}{Remark}

\newcommand{\eps}{\varepsilon}

\newcommand*{\claimproofname}{Proof of claim}

\usepackage{lineno}
\theoremstyle{plain}
\newtheorem{thm}{Theorem}

\newtheorem{lem}{Lemma}

\newtheorem*{clm*}{Claim}

\theoremstyle{definition}
\newtheorem{rem}{Remark}

\usetikzlibrary{shapes.geometric}
\usepackage{tikz,xcolor,verbatim,hyperref}

\usepackage{euflag}

\title[A note on multicolour Ramsey numbers and random sphere graphs]{A note on multicolour Ramsey numbers \\ and random sphere graphs}
 \author{Yamaan Attwa}
 \author{Albert L\'opez Vidal}
 \author{Patrick Morris}

 \address{YA: Freie Universit\"at Berlin, Germany.  \newline \indent  ALV, PM: Universitat Polit\`ecnica de Catalunya (UPC), Barcelona, Spain.}

\thanks{ YA was supported by the Deutsche Forschungsgemeinschaft (DFG, German Research Foundation) under Germany´s Excellence Strategy – The Berlin
Mathematics Research Center MATH+ (EXC-2046/1, project ID: 390685689).  PM was supported  by  the European Union's Horizon Europe   Marie Sk{\l}odowska-Curie grant RAND-COMB-DESIGN - project number
101106032 {\euflag}. This work was carried out during a visit of the first author to the UPC, funded by the AEI and DFG bilateral research project SRC-ExCo (PCI2024-155080-2). }

\email{y.attwa@fu-berlin.de, lopez.vidal.albert@gmail.com, pmorrismaths@gmail.com}

\date{\today}

\begin{document}

\begin{abstract}
The Ramsey number $r(t;\ell)$ is the smallest  $n$ such that every $\ell$-coloring of the edges of $K_n$ gives a monochromatic $K_{t}$. In recent years, there have been several improvements on asymptotic lower bounds for these numbers when $\ell\geq 3$. This  started with a breakthrough result of Conlon and Ferber, followed by further improvements of Wigderson and then Sawin. Building on the previous approaches, Sawin used blowups of an unbalanced binomial random graph to show that there is some explicit constant $\delta_*\approx 
0.383796$ such that $r(t;\ell)\geq 2^{\delta_*(\ell-2)t+t/2+o(t)}$. In this short note, we show that one can get an exponential improvement in this bound  by replacing the use of a binomial random graph with a random sphere graph, a model which which has recently been applied by Ma, Shen and Xie in a breakthrough on lower bounds for (2-colour) Ramsey numbers in the (slightly) off-diagonal setting. 
\end{abstract}

\maketitle

\vspace{-6mm}
\section{Introduction}

The Ramsey number $r(t;\ell)$ is the smallest positive integer $n$ such that every $\ell$\textit{-coloring} of the edges of the complete graph on $n$ vertices contains a monochromatic copy of $K_{t}$. The most famous of these numbers correspond to the case $\ell=2$ and determining the order of $r(t;2)$ is a major open problem in combinatorics. Despite recent breakthroughs \cite{GUPTA,CAMPOS} exponentially improving upper bounds, there remains a gap between the best known bounds. Indeed our current knowledge places $r(t;2)$ between $2^{t/2(1+o(1))}$ and $(4-\eps)^{t(1+o(1))}$ for $\eps\approx 0.2008$  \cite{GUPTA}. The lower bound has been especially stubborn with the best known approach being to consider an appropriate binomial random graph $G(n,1/2)$ and colour all edges red and all non-edges blue, showing that with positive probability there is no monochromatic clique of size $t$. This  extremely influential idea is due to Erd\H{o}s \cite{erdos1947some} from 1947 (see also \cite{spencer1977asymptotic}).

There has been  more success with lower bounds for  more colours, that is $\ell\geq 3$. By generalising the idea of Erd\H{o}s and considering a random colouring of $K_n$, one gets a lower bound of $r(t;\ell)\geq \ell^{t/2}$. Abbott \cite{ABBOTT} improved this in 1972 for $\ell\geq 4$ to  $r(t;\ell)\geq 2^{\frac{t}{2}\lfloor \frac{\ell}{2} \rfloor}$ using a simple \textit{product construction} idea. 
It was not until Conlon and Ferber \cite{CONLONFERBER} in 2021 that there was an exponential improvement in these lower bounds. Their construction for $\ell=3$ used a (random induced subgraph of) an explicit algebraically defined graph for the first colour, with the remaining edges being coloured uniformly at random. For $\ell=4$, their colouring was similar, with the first two colours being inherited from an algebraic colouring of a graph and the final two being random. 
For larger $\ell$, they used the product construction idea of Abbott. Shortly after, Wigderson \cite{WIGDERSON} showed that one sees a further exponential improvement for $\ell\geq 4$ by avoiding the use of product constructions and having the first $\ell-2$ colours all defined via random homomorphisms to the explicit $K_t$-free graph used by Conlon and Ferber. 
Finally, Sawin \cite{SAWIN} built on Wigderson's approach and showed that one sees yet another exponential improvement (for all $\ell\geq 3$) by replacing the Conlon-Ferber graph by a binomial random graph $G(n,p)$.  He showed that for each $p\in (0,1)$  using $G(n,p)$ leads to a 
lower bound of the form $r(t;\ell)\geq 2^{\delta(p)(\ell-2)t+t/2+o(t)}$ with 
$\delta(p):=-\frac{(4 \log(1-p)-\log(p)) \log(p)}{8 \log(1-p)}$. The expression $\delta(p)$ is 
maximised at  $p_*\approx 0.454997$ with $\delta_*:=\delta(p_*)\approx 0.383796$.

Very recently, there was another remarkable breakthrough in lower bounds for Ramsey numbers. This came from Ma, Shen and Xie \cite{MASHENXIE} (see also \cite{hunter2025gaussian}) who utilised a \textit{random sphere graph} model (Definition \ref{def:random sphere}) to give an exponential improvement on the binomial random graph construction of Erd\H{o}s for a slight variation of the 2-colour Ramsey numbers $r(t;2)$ defined above, where in one of the colours one forbids $K_{Ct}$ for $C\neq 1$, as opposed to $K_t$.  In this note, we show that one can use this random sphere graph in the framework developed in \cite{CONLONFERBER,WIGDERSON,SAWIN} in order to get yet another exponential improvement in $r(t;\ell)$.

\begin{thm}\label{thm: lowerbound}
    There exists an $\varepsilon >0$ such that for every fixed $\ell \geq 3$, 
      $r(t;\ell) \geq 2^{(\delta_*+\varepsilon)(\ell-2)t+ t/2 - o(t)}.$
\end{thm}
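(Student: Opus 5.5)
We will follow the Conlon--Ferber/Wigderson/Sawin framework and change only the base graph, replacing $G(N,p)$ by a random sphere graph. Fix a base graph $H$ on $N$ vertices and an integer $t$. Colour $K_n$ as follows. Choose independent uniformly random maps $f_1,\dots,f_{\ell-2}\colon [n]\to V(H)$. Give an edge $uv$ colour $i\le \ell-2$ if $i$ is the smallest index with $f_i(u)f_i(v)\in E(H)$. If no such index exists, colour $uv$ with one of the last two colours uniformly at random. Each colour class $i\le\ell-2$ lies in the pullback of $H$, so it has no $K_t$ as long as $H$ is $K_t$-free. A $t$-set $S$ is monochromatic in one of the last two colours only if, for every $i$, the set $f_i(S)$ spans an independent set of $H$, where $f_i$ may be non-injective. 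Given this event, the probability that $S$ is monochromatic is $2^{1-\binom t2}$. We then apply the first moment method and delete one vertex from each bad $t$-set. This gives $r(t;\ell)>n/2$ as long as
\[
 n^{t}\,\Big(\Pr\big[f(S)\text{ independent in }H\big]\Big)^{\ell-2}\,2^{-\binom t2}\ll n .
\]

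The key quantity is therefore $q(H):=\Pr[f(S)\text{ is independent}]$ for a uniformly random map $f$ from a $t$-set. Following Wigderson and Sawin, we bound it by summing over the image sizes $s$ and the multiplicity profiles. This gives $q(H)\le \sum_{s}\binom{t}{s}$-type terms multiplied by $(s/N)^{t}\cdot I_s(H)$, where $I_s(H)$ is the number of independent $s$-sets. Sawin's optimisation then shows that if $H$ is $K_t$-free and has about $N^{s}\,\beta^{s^2/2}$ independent $s$-sets for all $s$ in the relevant range (with $N=2^{\Theta(t)}$), the exponent one obtains is $\delta(\cdot)$, a function of the clique and independent set exponents of $H$. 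For $H=G(N,p)$ these exponents are $\log(1/p)$ and $\log(1/(1-p))$, and the formula gives $\delta(p)$. So we first restate Sawin's argument abstractly, as a lemma. Suppose that $H$ has no $K_t$ and that, for every $s\in[\eta t,t]$, the number of independent $s$-sets is at most $N^{s}2^{-c s^2/2+o(t^2)}$, where $\log_2 N=at$. Then the lemma gives $r(t;\ell)\ge 2^{\delta(a,c)(\ell-2)t+t/2-o(t)}$, and at $a=\tfrac12\log_2(1/p)$, $c=\log_2(1/(1-p))$ it recovers $\delta(p)$.

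Next we take $H$ to be the random sphere graph of Ma--Shen--Xie at density $p_*$. This means $N$ random points on $S^{k-1}$ with $k=\Theta(t)$, and we join $x,y$ when $\langle x,y\rangle\le -\tau/\sqrt k$ (or use the appropriate threshold). We then invoke their estimates. Because of the geometric repulsion, the probability that a given $t$-set is a clique is at most $p^{\binom t2}2^{-\Omega(t^2)}$. Meanwhile, the probability that an $s$-set is independent is at most $(1-p)^{\binom s2}2^{+O(\cdot)}$, with a loss that is smaller than the gain for the relevant $s$. So for a suitably small perturbation we can increase $N$ to $2^{(a+\gamma)t}$ while keeping $H$ $K_t$-free after deletions, and lose at most $\gamma' s^2$ in the independent set count, where $\gamma'$ is much smaller than $\gamma$. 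Because $\delta_*$ is a nondegenerate maximum, the first-order gain in $a$ (note $\partial\delta/\partial a>0$) beats the second-order loss. This yields $\delta_*+\varepsilon$ with $\varepsilon$ independent of $\ell$, since the exponent is linear in $\ell-2$.

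The main obstacle is the last step. We must check that the sphere graph's independent set count is controlled uniformly over all $s$ in the range Sawin's optimisation uses, not only for $s\approx t$, and we must balance the clique gain against the independent set loss at $p_*$ explicitly. I would first try Ma--Shen--Xie's small-parameter expansion, with the threshold tending to $0$ at a suitable rate, to obtain both estimates as $\pm\Theta(\lambda)s^2$ perturbations. I would then verify by direct differentiation of Sawin's exponent that a small choice of $\lambda$ gives a strict improvement.
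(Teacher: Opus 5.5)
Your reduction is the paper's. The random-homomorphism colouring with deletion re-derives Sawin's Lemma~\ref{lem: sawin}. Your sum of $I_s(H)(s/N)^t$ over image sizes is the paper's bound on the numerator. The optimisation over $s$ gives $\delta(a,c)=a-\frac{a^2}{2c}$ in your notation, which is $\delta(p)$ at $a=\frac12\log(1/p)$, $c=\log(1/(1-p))$. The uniformity in $s$ you worry about is not an issue: Lemma~\ref{lem: probabilities} holds for every $2\le r\le t$, and the paper simply uses $r-2\le t$ to get one per-pair factor for all $r$.

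The problems are in the sphere-graph step. First, the dimension must be $k=D^2t^2$ with $D\ge D_0$ a large but \emph{fixed} constant, so that the small parameter is $t/\sqrt k=1/D$. For $k=\Theta(t)$ the Ma--Shen--Xie estimates do not apply. Indeed, there $0\le\|\sum_i x_i\|^2\le r-r(r-1)c_k/\sqrt k$ already caps every clique at $1+\sqrt k/c_k=O(\sqrt t)$ vertices, a completely different regime. Also, $D$ cannot grow with $t$, because the eventual gain is $\varepsilon=\Theta(1/D)$.

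Second, and this is the real gap, the mechanism you give for the improvement is false: the loss on independent sets is not second order. Lemma~\ref{lem: probabilities} gives the per-pair factor $1-p_*+\frac{a_k(r-2)}{(1-p_*)^2\sqrt k}+\frac{C}{D^2}$. This is an increase of order $1/D$, the same order as the clique-side decrease $\frac{a_k(r-2)}{p_*^2\sqrt k}$. With the paper's choice of $M$ and its uniform bound $r-2\le t$, one gets, to first order in $1/D$, $\gamma'/\gamma\approx(p_*/(1-p_*))^3\approx0.58$. So $\gamma'$ is comparable to $\gamma$, as your own last paragraph, with its $\pm\Theta(\lambda)s^2$, implicitly concedes. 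Stationarity of $\delta$ at $p_*$ does not make this loss negligible. It only neutralises first-order moves along the $G(n,p)$ curve, and this perturbation is not one.

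So everything hinges on a sign computation your proposal does not carry out. It is the paper's final step,
$f'(0)=\frac{a}{2p_*^3}-\frac{a\log p_*}{4p_*^3\log(1-p_*)}-\frac{a\log^2p_*}{8(1-p_*)^3\log^2(1-p_*)}\approx0.565a$.
Here $a=\lim_k a_k$ is the Ma--Shen--Xie constant, not your $\log_2N/t$. The last (independence) term cancels about 70\% of the first two.

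Your stationarity idea can be rescued if used correctly. Put $L=\log p_*/\log(1-p_*)$. The condition $\delta'(p_*)=0$ reads $\frac{1}{2p_*}-\frac{L}{4p_*}=\frac{L^2}{8(1-p_*)}$, and substituting it collapses the expression to
\[
f'(0)=\frac{a(1-2p_*)L^2}{8p_*^2(1-p_*)^3},
\]
which is positive precisely because $p_*<1/2$. Equivalently, in your $(a,c)$-plane the sphere perturbation points along $\bigl(\frac{1}{2p_*^3},-\frac{1}{(1-p_*)^3}\bigr)$. This is $p_*^{-2}$ times the neutral $G(n,p)$ direction $\bigl(\frac{1}{2p_*},-\frac{1}{1-p_*}\bigr)$, plus the positive increment $\frac{1-2p_*}{p_*^2(1-p_*)^3}$ in $c$. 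Since $\delta$ is increasing in $c$, the net first-order change is positive.
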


\begin{rem} The result in Theorem \ref{thm: lowerbound} was obtained independently and simultaneously\footnote{Their manuscript appeared on arXiv on the $21^{\text{st}}$ January 2026. Whilst ours appeared later, our result also appeared in the Master's thesis \cite{Albertthesis} of the second author, submitted on the $8^{\text{th}}$ January 2026. }  by Campos and Pohoata \cite{campos2026update}. In fact they proved more, showing that for each $p\in (0.42,0.5)$ one can improve the corresponding lower bound of Sawin by replacing $G(n,p)$ with a random sphere graph of the same density. 
    \end{rem}

We close by remarking that the best known upper bound on $r(t;\ell)$ is of the form $ 2^{t\ell \log \ell-\alpha_\ell t }$ obtained recently by Balister, Bollob\'as, Campos, Griffiths, Hurley, Morris, Sahasrabudhe and Tiba \cite{BALISTERETALL}. 

\section{Preliminaries} \label{sec:prelims}

\subsection{Sawin's framework} 

\begin{definition}
    For $t,r\geq 2$, we define $c_{r,t}$ to be the infimum over all $K_t$\textit{-free} graphs of the probability that $r$ independent and uniformly distributed vertices (allowing repetition) form an independent set.
\end{definition}

Sawin \cite{SAWIN} showed that upper bounds on $c_{t,t}$ translate to lower bounds on $r(t;\ell)$.
\begin{lem}[\cite{SAWIN}, Lemma 1 ]\label{lem: sawin}
    For all $\ell, t \geq 3$, we have
    \[r(t;\ell) \geq c_{t,t}^{-\frac{\ell-2}{t}}2^{\frac{t-1}{2}}.\]
\end{lem}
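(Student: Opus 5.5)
The plan is to follow Sawin's argument, which in turn refines Wigderson's and Conlon--Ferber's constructions. Fix $\ell,t\geq 3$ and a $K_t$-free graph $H$ on a vertex set $V$ for which the probability that $t$ independent uniform vertices of $V$ span an independent set in $H$ is at most $c_{t,t}+\eta$, for an arbitrarily small $\eta>0$. For $i=1,\dots,\ell-2$ we choose independent uniformly random maps $f_i\colon [n]\to V$. We then colour an edge $uv$ of $K_n$ with the smallest $i\le \ell-2$ such that $f_i(u)f_i(v)\in E(H)$. Edges receiving no colour this way are coloured with colour $\ell-1$ or $\ell$ independently and uniformly at random.

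First, colours $1,\dots,\ell-2$ contain no $K_t$. A monochromatic $K_t$ in colour $i$ on vertices $v_1,\dots,v_t$ forces the $f_i(v_j)$ to be pairwise adjacent in $H$. In particular they are distinct, since $H$ has no loops, so they would form a $K_t$ in $H$, which is impossible. Hence only the last two colours can contain a monochromatic $K_t$.

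Next, fix a $t$-set $S\subseteq[n]$. For $S$ to be monochromatic in colour $\ell-1$ or $\ell$, every $f_i(S)$ must be an independent set of $H$. The $t$ values $f_i(v)$, $v\in S$, are independent and uniform on $V$, so this has probability at most $(c_{t,t}+\eta)$ for each $i$. Independence across $i$ gives a bound of $(c_{t,t}+\eta)^{\ell-2}$. Conditioned on this event, all $\binom t2$ edges of $S$ are randomly coloured, so $S$ is monochromatic with probability $2\cdot 2^{-\binom t2}$.

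The expected number of monochromatic $K_t$ is therefore at most
\[\binom nt\, 2^{1-\binom t2}(c_{t,t}+\eta)^{\ell-2}\le \frac{2}{t!}\Big(n\,2^{-(t-1)/2}(c_{t,t}+\eta)^{(\ell-2)/t}\Big)^t .\]
This is less than $1$ whenever $n\le (c_{t,t}+\eta)^{-(\ell-2)/t}2^{(t-1)/2}$, since $2/t!<1$ for $t\geq 3$. Letting $\eta\to0$ and using that $n$ is an integer, we obtain a colouring of $K_n$ with no monochromatic $K_t$ for every $n<c_{t,t}^{-(\ell-2)/t}2^{(t-1)/2}$. This gives the claimed bound on $r(t;\ell)$ (if the bound is an integer $N$, the infimum is approached strictly, so $n=N-1$ works).

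The only delicate point is the boundary and infimum bookkeeping in this last step. One must also ensure that $H$ can be taken with finitely many vertices, which is implicit in the definition of $c_{t,t}$. The probabilistic core is straightforward once the colours are assigned in priority order and we observe that the maps $f_i$ are independent across $i$.
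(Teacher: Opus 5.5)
Your argument is correct and is essentially Sawin's proof of this lemma, which the paper only cites: random maps $f_1,\dots,f_{\ell-2}$ into a $K_t$-free graph that nearly attains $c_{t,t}$ define the first $\ell-2$ colours in priority order, the remaining edges are $2$-coloured uniformly at random, and the first-moment bound $\binom{n}{t}2^{1-\binom{t}{2}}(c_{t,t}+\eta)^{\ell-2}\le \frac{2}{t!}<1$ for $n\le (c_{t,t}+\eta)^{-(\ell-2)/t}2^{(t-1)/2}$ finishes. Your handling of the infimum via $\eta\to 0$ and of the integrality of $n$ is also fine.
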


The best known lower bound on $r(t;\ell)$ is then obtained from the  following upper bound on $c_{t,t}$.
\begin{lem}[\cite{SAWIN}, Lemma 2] \label{lem: ctt} For any $p \in (0,1)$ and $t\in \mathbb{N}$ we have \begin{equation}\label{eq: ctt}
    c_{t,t} \leq 2^{\frac{t^2(4 \log(1-p) - \log (p) )\log (p)}{8 \log (1-p)}+t \log t + \mathcal{O}(t)}.
\end{equation} 
\end{lem}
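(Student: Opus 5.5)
We would prove Lemma~\ref{lem: ctt} with Erd\H{o}s's deletion method applied to $G=G(n,p)$. The size $n$ is chosen so that $G$ has few copies of $K_t$. We then bound the number of $t$-tuples of vertices that span an independent set, splitting the tuples according to how many distinct vertices they contain.

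\textbf{Step 1 (choice of $n$ and alteration).} Write $L_p=\log(1/p)$ and $L_q=\log(1/(1-p))$, with logarithms to base $2$. Take $n=\lfloor \tfrac14 p^{-(t-1)/2}\rfloor$, so that $\log n=\tfrac t2 L_p-O(t)$. The expected number of copies of $K_t$ is at most $n^t p^{\binom t2}\le n/4^t\le n/4$. By Markov's inequality, with probability at least $1/2$ there are at most $n/2$ copies. Deleting one vertex from each copy leaves a $K_t$-free induced subgraph $G'$ on $n'\ge n/2$ vertices.

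\textbf{Step 2 (counting independent tuples).} Let $X$ be the number of ordered $t$-tuples in $V(G)^t$ whose underlying set is independent in $G$. A tuple with exactly $s$ distinct entries is independent with probability $(1-p)^{\binom s2}$. The number of such tuples is at most $\binom{t}{s}s^{t}n^{s}\le 2^{t\log t+O(t)}n^s$. Hence
\[
\mathbb E X\le n^t\,2^{t\log t+O(t)}\sum_{s=1}^{t} n^{-(t-s)}(1-p)^{\binom s2}\le n^t\, 2^{t\log t+O(t)}\max_{0\le s\le t}2^{-g(s)},
\]
where $g(s)=(t-s)\tfrac t2L_p+\tfrac{s^2}{2}L_q$. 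The correction from $\binom s2$ versus $s^2/2$ and from the $O(t)$ in $\log n$ contributes only $O(t)\cdot t$ in exponent terms of lower order, or can be absorbed into $O(t)$ after care; see Step~4. Relaxing $s$ to be real, $g$ is minimised at $s=tL_p/(2L_q)$. Its unconstrained minimum value is
\[
\min g=\frac{t^2L_p}{2}-\frac{t^2L_p^2}{8L_q}.
\]
The constrained minimum over $[0,t]$ is at least this. Rewriting in terms of $\log p$ and $\log(1-p)$, this is exactly $-\frac{t^2(4\log(1-p)-\log p)\log p}{8\log(1-p)}$.

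\textbf{Step 3 (passing to $G'$).} By Markov's inequality, with probability at least $3/4$ we have $X\le 4\,\mathbb E X$. Intersecting with the event of Step 1, some outcome satisfies both. For that outcome, the independent $t$-tuples of $G'$ are among those counted by $X$. So the probability that $t$ uniform vertices of $G'$ form an independent set is at most
\[
\frac{X}{(n')^t}\le 2^{t}\cdot\frac{4\,\mathbb E X}{n^t},
\]
which gives \eqref{eq: ctt} since $c_{t,t}$ is an infimum over $K_t$-free graphs.

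\textbf{Step 4 (main obstacle).} The main technical point is making sure the error terms really are $t\log t+O(t)$ and not $O(t^2)$-sized. Replacing $\log n$ by $\tfrac t2L_p$ costs $(t-s)\cdot O(1)=O(t)$, which is fine. Replacing $\binom s2$ by $s^2/2$ costs $\tfrac s2L_q=O(t)$, which is also fine for fixed $p$. So the only genuinely delicate bookkeeping is the combinatorial factor for tuples with repetitions. Bounding it crudely by $t^{t}2^{O(t)}$ accounts for the $t\log t$ term.
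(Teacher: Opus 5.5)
Your proof is correct. It enforces $K_t$-freeness by a different mechanism from the argument the paper relies on.

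The paper only cites this lemma from Sawin, but its proof of Theorem~\ref{thm: main} replays Sawin's argument with the sphere graph in place of $G(n,p)$. There one takes $M\approx p^{-t/2}$ and uses the union bound to get $\binom{M}{t}p^{\binom t2}\le p^{-t/2}/t!=o(1)$; it is the $1/t!$ that makes this small at that size of $M$. One then bounds $c_{t,t}$ by $\mathbb{P}(\{v_1,\dots,v_t\}\text{ ind.})/\mathbb{P}(G\text{ is }K_t\text{-free})$, which is the average independence probability conditioned on $K_t$-freeness, with denominator $1-o(1)$.

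You instead throw away the $1/t!$ and shrink $n$ to $\frac14p^{-(t-1)/2}$, so that the expected number of $K_t$'s is at most $n/4$. You then delete a vertex from each copy and use a second Markov inequality to select an outcome where the count $X$ of independent tuples is also typical. Passing to $G'$ costs only $(n/n')^t\le 2^t$.

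From there the two arguments coincide. Your $\mathbb{E}X/n^t$ is exactly the paper's numerator, split by the number of distinct vertices. The same quadratic is optimised, giving the leading term $-\frac{t^2L_p}{2}+\frac{t^2L_p^2}{8L_q}$.

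The conditioning route is shorter, with no deletion and a single probabilistic estimate. Your alteration route is more robust, since it only needs the expected number of $K_t$'s to be a small constant times $n$ rather than $o(1)$. Both routes change $n$ only by factors $2^{O(1)}$, so the exponent moves by $O(t)$ either way.

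One small slip: in Step 1 you should record $\log n=\frac t2L_p-O(1)$, not $-O(t)$. A genuinely linear loss in $\log n$ would cost $(t-s)\cdot O(t)=O(t^2)$ in the exponent. Step 4 uses the correct $O(1)$, so the argument stands.
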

Combining Lemmas \ref{lem: sawin} and \ref{lem: ctt} thus gives that for any $p\in (0,1)$, $r(t;\ell)\geq 2^{\delta(p)(\ell-2)t+t/2+o(t)}$ with
$\delta(p):=-\frac{(4 \log(1-p)-\log(p)) \log(p)}{8 \log(1-p)}$. For the remainder of the paper, we fix   $p_*\approx 0.454997$ to be the value of $p$ which maximises
$\delta(p)$ and fix $\delta_*:=\delta(p_*)\approx 0.383796$.
 
   \subsection{Random sphere graphs}
For a natural number $k$, let $S^k$ denote the $k$\textit{-dimensional} unit sphere embedded in $\mathbb{R}^{k+1}.$ Let $p \in (0,1/2)$; if $e \in S^k$ is any fixed arbitrary unit vector and $x$ is a unit vector chosen uniformly at random from $S^k$, we denote by $c^{k,p}>0$ the unique solution to the probabilistic equation \begin{equation}
    \mathbb{P}\Bigl( \langle x,e \rangle \leq \frac{-c^{k,p}}{\sqrt{k}}\Bigr) = p.
\end{equation} 
   Furthermore, we denote by $c_k:= c^{k,p_*} $. 
    \begin{lem}[\cite{MASHENXIE}, Lemma 4.1]\label{lem: c_kConverge} For any fixed $p \in (0,1/2)$, there exists some $c(p)>0$ such that: 
   \[c^{k,p}= c(p) + \mathcal{O}\Bigl(\frac{1}{k}\Bigl).\]
       In particular, there exists some $c_*>0$ such that $\lim_{k \rightarrow \infty}c_k:= \lim _{k \rightarrow\infty}c^{k,p_*}=c_*$.
   \end{lem}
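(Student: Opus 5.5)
We take $c(p):=-\Phi^{-1}(p)$, where $\Phi$ is the standard Gaussian distribution function; since $p<1/2$ this is positive. The plan is to show that the distribution function of $Y_k:=\sqrt{k}\,\langle x,e\rangle$ is within $O(1/k)$ of $\Phi$ uniformly on compact sets. Inverting this near $-c(p)$ then gives the claim.

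\emph{Density of $Y_k$.} For $x$ uniform on $S^k\subset\mathbb{R}^{k+1}$, the coordinate $\langle x,e\rangle$ has density proportional to $(1-s^2)^{(k-2)/2}$ on $[-1,1]$. Hence $Y_k$ has density
\[
f_k(y)=Z_k^{-1}\,g_k(y),\qquad g_k(y):=\Bigl(1-\tfrac{y^2}{k}\Bigr)^{(k-2)/2}\mathbbm{1}_{|y|\le\sqrt k}.
\]
\begin{itemize}
\item On $|y|\le \sqrt{k}/2$, expand $\log(1-u)=-u+O(u^2)$. This gives
\[
g_k(y)=e^{-y^2/2}\bigl(1+O((1+y^4)/k)\bigr)
\]
whenever $(1+y^4)/k$ is small, e.g.\ for $|y|\le k^{1/4}$.
\item For larger $|y|$, use the crude bound $g_k(y)\le e^{-y^2(k-2)/(2k)}\le e\cdot e^{-y^2/2}$. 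Its contribution is superpolynomially small: $\int_{|y|>k^{1/4}}e^{-y^2/2}\,dy=e^{-\Omega(\sqrt{k})}$.
\end{itemize}
Integrating these two estimates gives $Z_k=\sqrt{2\pi}\,(1+O(1/k))$. One can also check this via Stirling applied to the explicit Gamma-function expression for $Z_k$.

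\emph{Uniform CDF comparison.} Let $F_k$ be the distribution function of $Y_k$. Combining the density expansion with the normalisation gives $\sup_{y}|F_k(y)-\Phi(y)|=O(1/k)$. The expansion error is integrable against the Gaussian since $\int (1+y^4)e^{-y^2/2}\,dy<\infty$, and the tails are negligible as above.

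\emph{Inversion.} $F_k$ is continuous and strictly increasing on $[-\sqrt k,\sqrt k]$, so $c^{k,p}$ is well defined and unique: it is determined by $F_k(-c^{k,p})=p$.
\begin{itemize}
\item \emph{Localisation.} Since $\mathbb{E}Y_k^2=k/(k+1)\le 1$, Chebyshev shows $c^{k,p}\le p^{-1/2}$. The uniform comparison also shows $c^{k,p}$ is bounded away from $0$ for large $k$. So $c^{k,p}$ lies in a fixed compact interval $I\subset(0,\infty)$.
\item \emph{Mean value step.} On $I$ we have $\Phi'\ge \eta>0$. Then
\[
|\Phi(-c^{k,p})-\Phi(-c(p))|=|\Phi(-c^{k,p})-F_k(-c^{k,p})|=O(1/k),
\]
and the mean value theorem yields $|c^{k,p}-c(p)|\le \eta^{-1}\,O(1/k)$.
\end{itemize}
The ``in particular'' statement follows by taking $p=p_*<1/2$ and $c_*:=c(p_*)$.

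The main obstacle is the uniform $O(1/k)$ control in the first step: getting the error term in $\log g_k$ right, and handling the normalising constant without losing more than $1/k$. The remaining steps are routine.
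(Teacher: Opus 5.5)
Your proof is correct. The paper gives no argument of its own for this lemma; it imports it directly from Ma, Shen and Xie. Your write-up is therefore a valid self-contained proof rather than something to compare step by step with the paper.

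The individual steps all check out:
\begin{itemize}
\item The coordinate density $(1-s^2)^{(k-2)/2}$ on $S^k\subset\mathbb{R}^{k+1}$ is right.
\item The local expansion $g_k(y)=e^{-y^2/2}\bigl(1+O((1+y^4)/k)\bigr)$ on $|y|\le k^{1/4}$, together with the tail bound $g_k\le e\cdot e^{-y^2/2}$, gives $\int|f_k-\Phi'|=O(1/k)$. This yields both $Z_k=\sqrt{2\pi}(1+O(1/k))$ and $\sup_y|F_k(y)-\Phi(y)|=O(1/k)$.
\item The inversion via Chebyshev and the mean value theorem correctly gives $c^{k,p}=-\Phi^{-1}(p)+O(1/k)$.
\end{itemize}

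One small simplification is available. Since $\Phi'$ is decreasing in $|y|$, the mean value step only needs the upper bound $c^{k,p}\le p^{-1/2}$. That bound gives $\Phi'\ge \Phi'\bigl(\max\{p^{-1/2},c(p)\}\bigr)>0$ on the segment between $-c^{k,p}$ and $-c(p)$. So the extra argument keeping $c^{k,p}$ bounded away from $0$ is not needed.
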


   \begin{definition} \label{def:random sphere}
       Given $n,k \geq 1$ and $p\in (0,1/2)$, we define the random graph model $G_{k,p}(n)$ as the graph $G$ on $n$ vertices sampled independently and uniformly at random from $S^k$ where any pair of vertices $u,v$ forms an edge iff $\langle u,v\rangle\leq \frac{-c^{k,p}}{\sqrt{k}}$.
   \end{definition}
   To ease the notation we define $G_k(n):=G_{k,p_*}(n)$ and we sometimes consider $V(G_k(n))=[n]$.

   \begin{lem}[\cite{MASHENXIE}, Theorem 7.1 and proof of Lemma 9.2]\label{lem: probabilities} There exists some $D_0,C >0$ \footnote{As clarified at the start of Section 6 in \cite{MASHENXIE}, the constant $C>0$ we use here (that is hidden in an $\mathcal{O}(1/D^2)$ term in \cite{MASHENXIE}) is independent from $D$ but can depend on $c_k$ (which in turn depends on $k$ and therefore $t$). However, as we take $t$ (and hence $k$) to be large, by Lemma \ref{lem: c_kConverge} we can assume that $c_k$ is close to $c_*$ and thus the constant $C>0$ used here can be taken to be the maximum of all of the implied constants in \cite{MASHENXIE} over choices of $c_k$ in a small interval around $c_*$. }  such that the following holds: If $n,t \in \mathbb{N}$ are sufficiently large, $D\geq D_0$, $k=D^2t^2$, $G \sim G_{k}(n)$, and $2\leq r \leq t$, then 
   \begin{align*}
       \mathbb{P}([r] \text{ is a clique in $G$}) &\leq (1+2^{-r}) \Biggl( p_* - \Bigl( \frac{e^{-c^2_{k}}}{2\pi}\Bigr)^{3/2}\frac{r-2}{3p_*^2\sqrt{k}}+ \frac{C}{D^2}\Biggr)^{\binom{r}{2}}, \text{ and } \\
       \mathbb{P}([r] \text{ is ind. in $G$}) &\leq   (1+2^{-r}) \Biggl(1- p_* + \Bigl( \frac{e^{-c^2_{k}}}{2\pi}\Bigr)^{3/2}\frac{r-2}{3(1-p_*)^2\sqrt{k}}+ \frac{C}{D^2}\Biggr)^{\binom{r}{2}}.
   \end{align*}
   \end{lem}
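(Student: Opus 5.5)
The plan is to reveal the vertices one at a time and track, for each new vertex, the conditional probability that it is adjacent (for the clique bound) to all earlier vertices. Write the points of $G\sim G_k(n)$ as $x_1,\dots,x_r\in S^k$. Then
\[
\mathbb{P}([r]\text{ is a clique})=\prod_{s=2}^{r}\mathbb{P}\bigl(\langle x_s,x_i\rangle\le -c_k/\sqrt{k}\ \forall i<s \,\big|\, [s-1]\text{ is a clique}\bigr).
\]
Fix $x_1,\dots,x_{s-1}$ forming a clique. The vector $\sqrt{k}\,(\langle x_s,x_i\rangle)_{i<s}$ is, up to a multiplicative error of $1+O(2^{-r}/r)$ per step, a centred Gaussian vector $(Z_i)_{i<s}$ with covariance equal to the Gram matrix $I+E$. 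Here $E_{ij}=\langle x_i,x_j\rangle$ for $i\neq j$. This approximation holds because $s\le t\ll k=D^2t^2$ and $x_s$ is uniform on a high-dimensional sphere. Multiplying these per-step errors over $s\le r$ gives the prefactor $1+2^{-r}$. I will take this sphere-to-Gaussian comparison as a standard (if technical) concentration and local-limit estimate.

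The main step is a second-order expansion of the Gaussian orthant probability $P(\rho)=\mathbb{P}(Z_i\le -c_k\ \forall i<s)$ around $\rho=0$, where $\rho_{ij}=E_{ij}$. At $\rho=0$ it equals $p_*^{s-1}$. By Plackett's identity,
\[
\frac{\partial P}{\partial\rho_{ij}}=\varphi(c_k)^2\,\mathbb{P}(Z_m\le -c_k\ \forall m\neq i,j)\approx \varphi(c_k)^2 p_*^{s-3},
\]
so that
\[
P(\rho)=p_*^{s-1}\Bigl(1+\frac{\varphi(c_k)^2}{p_*^2}\sum_{i<j<s}\rho_{ij}+O\bigl(s^4\max\rho_{ij}^2\bigr)\Bigr).
\]
Each $\rho_{ij}$ is itself conditioned on $\{\rho_{ij}\le -c_k/\sqrt{k}\}$. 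I will not bound it crudely by $-c_k/\sqrt{k}$; instead I average over the earlier conditioning. Since $\sqrt{k}\rho_{ij}$ is approximately a standard normal conditioned on being $\le -c_k$, its conditional mean is $-\varphi(c_k)/p_*$. This gives the expected correction $-\varphi(c_k)^3\binom{s-1}{2}/(p_*^3\sqrt{k})$ in step $s$.

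To pass from averaging to the product, I would run the argument as a single joint Gaussian computation on all $\binom r2$ inner products. Equivalently, one can use that the conditioned step probabilities multiply and apply Jensen in the favourable direction. Either way, this yields the overall factor
\[
p_*^{\binom r2}\Bigl(1-\sum_{s}\frac{\varphi^3\binom{s-1}{2}}{p_*^3\sqrt{k}}\Bigr).
\]
Now $\sum_{s\le r}\binom{s-1}{2}=\binom r3=\binom r2\frac{r-2}{3}$. Distributing this over the $\binom r2$ edges, and using $\varphi(c)^3=(e^{-c^2}/2\pi)^{3/2}$, gives exactly the per-edge base
\[
p_*-\Bigl(\frac{e^{-c_k^2}}{2\pi}\Bigr)^{3/2}\frac{r-2}{3p_*^2\sqrt k}.
\]

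The error terms remain to be controlled, and this is where I expect the main obstacle. The quadratic remainder per step is of order $s^4/k$ times $p_*^{s-1}$. Distributed over the $s$ new edges, this is about $s^3/k\le t^3/(D^2t^2)$ per edge, which is too big by a factor of $t$ if bounded crudely. So the expansion has to be done more carefully. Most quadratic terms are products $\rho_{ij}\rho_{lm}$ over disjoint pairs, and these are absorbed by exponentiating the first-order term: one writes $P=p_*^{s-1}\exp(\cdots)$ rather than as a linear expansion. Only the terms involving a shared index, or diagonal terms, remain. There are $O(s^3)$ of these, each of size $1/k$, giving $O(s^2/k)=O(1/D^2)$ per new edge, which is absorbed into $C/D^2$.

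Rare configurations, in which some $|\rho_{ij}|$ is much larger than $\sqrt{\log r/k}$, are handled by a union bound with Gaussian tails. Their contribution goes into the factor $1+2^{-r}$. The constant $C$ depends on $c_k$ only through $\varphi(c_k)$ and bounded derivatives, so by Lemma~\ref{lem: c_kConverge} it is uniform for large $t$. The independent-set bound is identical, with the events $Z_i\ge -c_k$ of probability $1-p_*$. Plackett's identity then gives the same derivative with the opposite sign relative to the base, and the conditional means of $\rho_{ij}$ have the opposite sign as well. The linear correction therefore enters with a $+$ sign and with $(1-p_*)^2$ in the denominator.
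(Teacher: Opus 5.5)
The paper gives no proof of this lemma. It is quoted from Ma--Shen--Xie~\cite{MASHENXIE} (Theorem 7.1 and the proof of Lemma 9.2), and the paper adds only the footnote on choosing $C$ uniformly for $c_k$ near $c_*$. Your sketch gets the main term right: the first-order correction $\frac{\varphi(c_k)^2}{p_*^2}\sum\rho_{ij}$, with $\sqrt{k}\rho_{ij}$ of conditional mean $\approx-\varphi(c_k)/p_*$, summed over $\sum_s\binom{s-1}{2}=\binom r3$ triangles, is exactly the source of $\frac{r-2}{3}\varphi(c_k)^3/(p_*^2\sqrt k)$. But there is a genuine gap in the only hard part. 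You must show that every other correction costs $O(1/D^2)$ per edge \emph{uniformly in $t$}. Any $t$-dependent loss, even $\log t$, destroys the $\varepsilon$ in Theorem~\ref{thm: main}.

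The devices you name do not deliver this.

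(i) There is no favourable Jensen. The step probability is roughly $p_*^{s-1}e^{Y}$ with $Y=\frac{\varphi^2}{p_*^2}\sum_{i<j<s}\rho_{ij}$, which is convex in $\rho$. Substituting conditional means therefore gives a \emph{lower} bound. What is needed is a cumulant bound $\log\mathbb{E}e^{Y}\le\mathbb{E}Y+O(s^2/k)$ under the law of the Gram entries conditioned on $[s-1]$ being a clique. That law is itself tilted by the triangle interaction, which also shifts $\mathbb{E}\rho_{ij}$ by $O(s/k)$; this shift is harmless but must be tracked. In the joint formulation, with Gram density $\propto\det(I+E)^{(k-r)/2}$, the requirement is that $k^{-1/2}\sum_{\triangle}Z_{ij}Z_{jl}Z_{il}$, under independent truncated Gaussians, has log-moment generating function at $1$ equal to its mean plus $O(r^4/k)$. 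You would also need control of the non-cubic remainder. None of this is supplied.

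(ii) Your truncation does not give the sizes you use. On $\{|\sqrt k\rho_{ij}|\lesssim\sqrt{\log r}\}$ the $\Theta(s^3)$ shared-index terms are only bounded by $O(s^3\log r/k)$ per step, i.e.\ $O(\log r/D^2)$ per edge. Your ``each of size $1/k$'' is the \emph{typical} size. It can only be justified through an exponential-moment bound under the conditioned law, for instance for $\frac1k\sum_i\bigl(\sum_l|\sqrt k\rho_{il}|\bigr)^2$, which is again the missing estimate.

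(iii) The accounting of the factor $1+2^{-r}$ is wrong. The sphere-to-Gaussian density ratio on the orthant is $e^{\Theta(s^2/k)}$ per step, coming from the $|u|^4/k$ term of $(1-|u|^2/k)^{(k-s)/2}$. For $s\asymp t$ this is $\Theta(1/D^2)$, not $1+O(2^{-r}/r)$. Likewise, a union bound at level $\sqrt{\log r}$ gives a relative error only polynomially small in $r$, not $2^{-r}$. Both errors survive only because they can be charged to $C/D^2$.

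Two smaller points. Your displayed factor $p_*^{\binom r2}\bigl(1-\sum_s\cdots\bigr)$ must be exponentiated, since $\binom r3/\sqrt k$ is of order $t^2/D$ when $r\asymp t$. In the independent-set case Plackett's derivative keeps its sign (orthant probabilities increase with correlations); only the conditional mean of $\rho_{ij}$ flips.
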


   \section{Proof of Theorem \ref{thm: lowerbound}}
   We will prove the following improvement to Lemma \ref{lem: ctt} when $p=p_*$. 

   \begin{thm} \label{thm: main}
       There exists some $\varepsilon>0$ such that \begin{equation}\label{eq: main} 
       c_{t,t} \leq 2^{-(\delta_*+\varepsilon)t^2 +o(t^2)}.
       \end{equation}
   \end{thm}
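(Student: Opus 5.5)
The plan is to repeat Sawin's proof of Lemma~\ref{lem: ctt} with $p=p_*$, using $G_k(n)$ with $k=D^2t^2$ in place of $G(n,p_*)$. Here $D\ge D_0$ is a large constant fixed at the end. The point is that Lemma~\ref{lem: probabilities} gives a clique probability per pair strictly below $p_*$ by order $1/D$. This lets us take $n$ exponentially larger, and we must check that this gain beats the order-$1/D$ loss in the independent-set probability.

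\textbf{Construction.} Let $K:=(e^{-c_*^2}/2\pi)^{3/2}$ and $\eta:=K/(6p_*^2D)$. By Lemma~\ref{lem: c_kConverge}, $c_k\to c_*$. For $r=t$ we have $(t-2)/\sqrt k\approx 1/D$. So for $D$ large (making $C/D^2\ll K/D$) and $t$ large, Lemma~\ref{lem: probabilities} gives
\[
\mathbb P([t]\text{ is a clique})\le 2(p_*-\eta)^{\binom t2}.
\]
Choose $n$ with $\log n=-\tfrac t2\log(p_*-\eta)-O(\log t)$, so that the expected number of $K_t$'s, $\binom nt\,2(p_*-\eta)^{\binom t2}$, is at most $n/4$. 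Delete one vertex from each $K_t$ to obtain a $K_t$-free graph $H$; with positive probability $H$ has at least $n/2$ vertices and also satisfies the averaged bound below.

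\textbf{Bounding the sampling probability.} Sampling $t$ uniform vertices of $H$ is dominated, up to a factor $2^t$, by sampling $t$ uniform vertices of $[n]$. So
\[
c_{t,t}\le 2^{t}\sum_{s\le t}\binom ts s^{t-s}n^{-(t-s)}\,\mathbb P([s]\text{ ind. in }G).
\]
This uses linearity of expectation and Markov's inequality, exactly as in Sawin. For $s\le t$, Lemma~\ref{lem: probabilities} bounds the last factor by
\[
2\left(1-p_*+\frac{K s}{3(1-p_*)^2Dt}+\frac{C}{D^2}\right)^{\binom s2}.
\]
Write $s=\beta t$. Up to $2^{O(t\log t)}$ factors, the exponent to maximise is
\[
f(\beta)=t^2\left[-(1-\beta)L+\tfrac{\beta^2}{2}\log q_\beta\right],
\]
where $L=\log n/t$ and $q_\beta$ is the base above. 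At $D=\infty$ this is exactly Sawin's optimisation. The maximiser is $\beta_*=\log p_*/(2\log(1-p_*))\approx 0.648$, and the maximum value is $-\delta_* t^2$.

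\textbf{Main obstacle: the first-order comparison in $1/D$.} By an envelope-theorem argument, the change in the maximum is governed by two competing terms. The gain from the larger $n$ contributes $-(1-\beta_*)\cdot\Delta L$, where
\[
\Delta L\approx \frac{\eta}{2p_*\ln 2}=\frac{K}{12p_*^3D\ln 2}.
\]
The loss from the worse independence bound contributes
\[
\frac{\beta_*^2}{2}\cdot\frac{K\beta_*}{3(1-p_*)^3D\ln 2}.
\]
The gain exceeds the loss iff $(1-\beta_*)/p_*^3>2\beta_*^3/(1-p_*)^3$. Numerically, the left side is about $3.74$ and the right side about $3.36$, so there is a net improvement $\varepsilon\asymp K/D$. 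One must check this uniformly over $\beta$, not just at $\beta_*$. Near $\beta_*$, continuity plus the envelope argument suffices. Away from $\beta_*$, Sawin's function is strictly below its maximum by a constant, which dominates the $O(1/D)$ perturbations once $D$ is large. The $C/D^2$ terms and the factors $(1+2^{-s})$, $2^t$, $\binom ts s^{t-s}$ are absorbed into $O(t^2/D^2)+o(t^2)$. Fixing $D$ large enough then yields \eqref{eq: main}. The margin $3.74$ versus $3.36$ is thin, so the constants in Lemma~\ref{lem: probabilities} and in the envelope expansion need to be tracked with care.
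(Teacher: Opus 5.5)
Your proof is correct. It follows the same overall strategy as the paper but differs in three respects.

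\textbf{Conditioning versus deletion.} The paper does not alter the graph. It takes $G\sim G_k(M)$ with $M=\bigl(p_*-\frac{a_k}{Dp_*^2}+\frac{C}{D^2}\bigr)^{-t/2}$, shows $\mathbb{P}(G \text{ is } K_t\text{-free})=1-o(1)$, and bounds $c_{t,t}$ by the ratio of $\mathbb{P}(R \text{ ind.})$ to that probability. Your deletion-plus-Markov step achieves the same end at a harmless extra cost of $2^{O(t)}$.

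\textbf{The independence loss.} The paper bounds the excess in the independence probability uniformly by its value at $r=t$, namely $\frac{a_k}{D(1-p_*)^2}$. This makes the exponent an exact quadratic in $r$, which it maximises in closed form to get the explicit function $f(1/D)$, and then it computes $f'(0)\approx 0.565a>0$. That derivative is exactly your envelope formula (gain minus loss at $\beta_*$), except that the loss term is proportional to $\beta_*^2$ rather than your $\beta_*^3$. Keeping the factor $s/t$ gives you a smaller loss. The price is that the exponent is no longer quadratic in $\beta$, so you need the near/far perturbation argument around $\beta_*$; your sketch of it is adequate.

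\textbf{The size of $\eta$.} Choosing $\eta=K/(6p_*^2D)$ throws away half of the clique gain. This is the only reason your margin ($3.73$ versus $3.37$) is thin. The term $C/D^2$ is $o(1/D)$ as $D\to\infty$, and the factor $(t-2)/t$ and the error in $c_k\to c_*$ vanish as $t\to\infty$ for fixed $D$. Hence you may take $\eta=(1-\theta)K/(3p_*^2D)$ for any fixed $\theta>0$. The comparison then becomes $(1-\beta_*)/p_*^3\approx 3.73$ against $\beta_*^3/(1-p_*)^3\approx 1.69$. In the same normalisation the paper's comparison is $3.73$ against $\beta_*^2/(1-p_*)^3\approx 2.60$, so yours is even more comfortable, and your closing warning about tracking constants carefully is unnecessary.
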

   Note that Theorem \ref{thm: lowerbound} follows immediately from Theorem \ref{thm: main} combined with  Lemma \ref{lem: sawin}. 
   \begin{proof} Let $C,D_0>0$ be the constants from Lemma \ref{lem: probabilities} and for $D>D_0$ (to be determined later) take $k=D^2t^2$ and $G \sim G_k(M)$. Define $a_k:= \frac{1}{3} \Bigl(\frac{e^{-c_k^2}}{2\pi}\Bigr)^{3/2}$ and 
   \[M= \Bigl(p_*-  \frac{a_k}{Dp_*^2}+ \frac{C}{D^2}\Bigr)^{-t/2} .\] 
    Let $v_1, . . . , v_t$ be uniform random variables over $[M]$ independent from each other and from $G$, then \begin{equation}\label{eq: cttfraction}
       c_{t,t} \leq \frac{\mathbb{P}\Bigl( \{v_1,\dots,v_t\} \text{ is an ind. set in $G$}\Bigr)}{\mathbb{P}\Bigl(G \text{ contains no clique of size $t$}  \Bigr) }.
   \end{equation}
   Indeed, the right-hand side is an upper bound for the expectation over $G$ of the probability that $\{v_1, . . . , v_t\}$ is independent conditional on $G$ containing no $K_t$. Since every random variable has an instance at which its value is at most its expectation, (\ref{eq: cttfraction}) follows. If $F$ is the denominator, using Lemma \ref{lem: probabilities}, the fact that $t/\sqrt{k}= 1/D$ and a union bound we get that that the probability $t$ random vertices in $G$ form a clique is
   \begin{align}
      1-F \leq &\binom{M}{t} (1+2^{-t}) \Biggl( p_* - \frac{a_k}{Dp_*^2}\cdot\frac{t-2}{t}+ \frac{C}{D^2}\Biggr)^{\binom{t}{2}}\\
       &\leq \frac{(1+2^{-t})}{t!}M^t  \Biggl( p_* - \frac{a_k}{Dp_*^2}+ \frac{C}{D^2}+\frac{2a_k}{ D p_*^2 t}\Biggr)^{\binom{t}{2}}.
   \end{align}
   Note that $\frac{2a_k}{Dp_*^2}$ and $p_*-\frac{a_k}{Dp_*^2}+\frac{C}{D^2}$ are bounded by constants independent of $t$ in light of Lemma \ref{lem: c_kConverge}. Therefore, there exists a constant $C_1$ such that
   \begin{equation}
       1-F \leq \frac{1+2^{-t}}{t!}e^{\frac{C_1t}{2}} = o(1),
   \end{equation}
   where we have used our choice of $M$, the identity $e^x\geq 1+x$, and that $t!\geq 2^{t\log(t)-O(t)}$. This implies that the denominator in (\ref{eq: cttfraction}) is $1-o(1)$ and we can divert our attention to the numerator. Let $R:=\{v_1, \dots, v_t\}$ and note that $|R|$ may well be below $t$ as repetition is allowed in the choice of the $v_i$, $i\in[t]$. By the law of total probability 
   \begin{align}
   &\mathbb{P}\bigl(R\text{ is ind. in $G$}\bigr)= \sum_{S \in 2^{[M]}} \mathbb{P}\Bigl( R = S\Bigr) \mathbb{P}\Bigl( \text{ $S$ is ind. in $G$ }\:\:\Big|\:\: R=S\Bigr)  \\
   &\leq \sum_{r=1}^t \binom{M}{r} \frac{{t\brace r}r!}{M^t}(1+2^{-t}) \left(1- p_* +  a_k\frac{r-2}{(1-p_*)^2\sqrt{k}}+ \frac{C}{D^2}\right)^{\binom{r}{2}},  
 \end{align}
 where ${t\brace r}$ is the Stirling number of the second kind. Indeed; if $|R|=r$, there are $\binom{M}{r}$ ways to choose the set $S$ where the elements of $\{v_1,\dots,v_t\}$ get mapped, ${t\brace r}$ ways to distribute the labelled random variables $v_1,\dots,v_t$ into $r$ indistinguishable fibres, and $r!$ ways to biject these fibres into the elements of $S$. Simplifying the terms further we get:
 \begin{equation}
     \label{eq: optexponent}
   \mathbb{P}\bigl( R \text{ is ind. in $G$}\bigr)\leq  2t! \underset{1 \leq r \leq t}{\max}2^{\frac{r(r-1)}{2} \log \bigl(1-p_*+ \frac{a_k}{D(1-p_*)^2}+\frac{C}{D^2}\bigr) + \frac{(t-r)t}{2} \log \bigl(p-\frac{a_k}{Dp_*^2}+\frac{C}{D^2}\bigr)},
 \end{equation}
 where we have used that $\sum_{r=0}^{r=t}{t \brace r} \leq t!$ and that $t/\sqrt{k}= 1/D$.
 Note that $t! \leq 2^{t \log t}$ can absorbed in the $2^{o(t^2)}$ error term. The exponent in (\ref{eq: optexponent}) is quadratic in $r$ and is maximized at \begin{equation}
     r=\frac{1}{2} + \frac{t \log \Bigl(p_*-\frac{a_k}{p_*^2D}+\frac{C}{D^2}\Bigr)} {2 \log(1-p_*+\frac{a_k}{D(1-p_*)^2}+\frac{C}{D^2})}.
     \end{equation}
     Plugging in this value for $r$ we get that the exponent in (\ref{eq: optexponent}) is at most 
      \begin{equation} \label{eq: expression}\frac{t^2\log\bigl(p_*-\frac{a_k}{Dp_*^2}+\frac{C}{D^2}\bigr) \Biggl( 4 \log \biggl(1-p_*+\frac{a_k}{D(1-p_*)^2}+\frac{C}{D^2}\biggr)-\log\biggl(p_*-\frac{a_k}{Dp_*^2}+\frac{C}{D^2}\biggr)\Biggr)}{8 \log \bigl(1-p_*+\frac{a_k}{D(1-p_*)^2}+\frac{C}{D^2}\bigr)}  + \mathcal{O}(t)\end{equation}

In light of Lemma \ref{lem: c_kConverge}, there exists some absolute constant $a>0$ such that for every $\eta >0$, if $t$ (and hence $k$) is sufficiently large then $a^-:=(1-\eta)a\leq a_k \leq (1+\eta)a=:a^+$. So for $k$ large we have that the expression in (\ref{eq: expression}) is at most
\begin{equation} \label{eq: expression}\frac{t^2\log\bigl(p_*-\frac{a^-}{Dp_*^2}+\frac{C}{D^2}\bigr) \Biggl( 4 \log \biggl(1-p_*+\frac{a^+}{D(1-p_*)^2}+\frac{C}{D^2}\biggr)-\log\biggl(p_*-\frac{a^+}{Dp_*^2}+\frac{C}{D^2}\biggr)\Biggr)}{8 \log \bigl(1-p_*+\frac{a^-}{D(1-p_*)^2}+\frac{C}{D^2}\bigr)}  + \mathcal{O}(t)\end{equation}
We introduce a formal variable $x$ in place of $1/D$ and define the function:

\begin{equation}
f(x) = -\frac{\log\bigl(p_*-\frac{a^-}{p_*^2}x+Cx^2\bigr) \Biggl( 4 \log \biggl(1-p_*+\frac{a^+}{(1-p_*)^2}x+Cx^2\biggr)-\log\biggl(p_*-\frac{a^+}{p_*^2}x+Cx^2\biggr)\Biggr)}{ 8\log \bigl(1-p_*+\frac{a^-}{(1-p_*)^2}x+Cx^2\bigr)}.
\end{equation}
Note that $\delta_*= f(0)$. A direct calculation shows that: \begin{equation}
    f'(0)=  \frac{a}{2p_*^3}-\frac{a\log(p_*)}{4p_*^3 \log(1-p_*)}-\frac{ a\log^2(p_*)}{8(1-p_*)^3 \log^2(1-p_*)} -\eta \cdot h(p_*,a),
\end{equation} 
for $h$ some function of $p_*$ and $a$.  
If $g$ is the limit of $f'(0)$ as $k$ approaches $\infty$, we have $g \approx 0.565a$. This means that for $t$ (and hence $k$) sufficiently large, $f'(0) \geq 0.56a>0.$ Hence, by appealing to the definition of a derivative, there exists some $\gamma >0$ small enough such that for all $0<x \leq \gamma$, we have $f(x) \geq\delta_*+0.5ax$. If $D_1 = 1/\gamma$, then for all $D\geq D_0,D_1$, we have $f(1/D)\geq \delta_* +0.5a/D > \delta_*$. Setting $D = \max\{D_0,D_1\}$ and $\varepsilon = 0.5a/D$, we conclude that 
\[c_{t,t} \leq 2^{-(\delta_*+\varepsilon)t^2+o(t^2)}.\]
\end{proof}

 \bibliographystyle{abbrv}
 \bibliography{mybib.bib}
\end{document}